\documentclass[reqno]{amsart}

\usepackage{amsmath, amsfonts, amsthm, amssymb, color,  graphicx, mathrsfs, cite}
\usepackage{comment,stmaryrd}

\theoremstyle{plain}
\newtheorem{theorem}{Theorem}[section]
\newtheorem{lemma}{Lemma}[section]
\newtheorem{proposition}{Proposition}[section]
\newtheorem{corollary}{Corollary}[section]

\theoremstyle{definition}
\newtheorem{definition}{Definition}[section]

\theoremstyle{remark}

\newtheorem{example}{Example}[section]

\numberwithin{equation}{section}
\allowdisplaybreaks

\usepackage{ifpdf}
\ifpdf \usepackage[colorlinks=true, citecolor=blue, linkcolor=blue, urlcolor=blue]{hyperref} \fi

\def\thrm{\begin{theorem}}
\def\thrml#1{\begin{theorem}\label{#1}}
\def\ethrm{\end{theorem}}
\def\lmm{\begin{lemma}}
\def\lmml#1{\begin{lemma}\label{#1}}
\def\elmm{\end{lemma}}
\def\dfntn{\begin{definition}}
\def\dfntnl#1{\begin{definition}\label{#1}}
\def\edfntn{\end{definition}}
\def\crllr{\begin{corollary}}
\def\crllrl#1{\begin{corollary}\label{#1}}
\def\ecrllr{\end{corollary}}
\def\xmpl{\begin{example}}
\def\xmpll#1{\begin{example}\label{#1}}
\def\exmpl{\end{example}}
\def\nmrt{\begin{enumerate}}
\def\enmrt{\end{enumerate}}
\def\qtn{\begin{equation}}
\def\qtnl#1{\begin{equation}\label{#1}}
\def\eqtn{\end{equation}}
\def\prpstn{\begin{proposition}}
\def\prpstnl#1{\begin{proposition}\label{#1}}
\def\eprpstn{\end{proposition}}

\def\proof{{\bf Proof}.\ }
\def\eprf{\hfill$\square$}

\def\qaq{\quad\text{and}\quad}

\def\cS{\mathcal {S}}

\def\irr{{\rm Irr}}

\def\qaq{\quad\text{and}\quad}

\begin{document}

\title{An affirmative answer to a question on connectivity of $p$-subgroup posets with irreducible characters}
\maketitle

\begin{center}
{\author Gang Chen$^{a, *}$,  Wenhua Zhao$^{b}$}
\end{center}

\vskip 3mm

\begin{abstract}

 Let $p$ be a prime, $e$ a nonnegative integer, and $G$  a finite $p$-group with  $p^{e+1}\mid|G|$. Let $I$ be the intersection of all subgroups of order $p^{e+1}$ in $G$. It is proved that $|I\cap Z(G)|\le |\pi_0(\Gamma_{p, e}(G))|\le |\irr (I)|$, where $\Gamma_{p,e}$, whose connected components is denoted by $\pi_0(\Gamma_{p, e}(G))$,  is the poset consisting of all pairs $(H, \varphi)$ with $H\le G$, $|H|\ge p^{e+1}$,  $\varphi\in \irr(H)$. Hence, an affirmative answer to Question 2 in \cite{MY} is obtained. 
\vskip 2mm

{Keywords: Irreducible characters,  Connectivity,  Finite $p$-groups.}
\end{abstract}

\maketitle
\small {2020 Mathematics Subject Classification:  20C05; 20D15.}
\date{}

\maketitle


\renewcommand{\thefootnote}{\empty}
\footnotetext{$^{a, *}$ Email: {997051@hainanu.edu.cn},  School of Mathematics and Statistics,  Hainan University, Haikou 570228, China}
\footnotetext{ $^{b}$ Corresponding author,  Email:  {zwh1999@mails.ccnu.edu.cn},  School of Mathematics and Statistics, Central China Normal University, Wuhan 430079, China.}

\section{Introduction}

Throughout this paper,  $p$ is a prime and $e$ is  a nonnegative integer. 

\medskip 
Let  $G$ be a finite group. The set of all $p$-subgroups of $G$ whose orders are greater than $p^e$ is denoted by $\cS_{p,e}(G)$. 

\medskip 

In \cite{MY}, the following poset is defined:
$$
\Gamma_{p,e}(G)=\{(H, \varphi)| H\in \cS_{p,e}(G), \varphi\in \irr(H)     \}, 
$$
with the partial order $``\le"$  defined as
$$
(K, \psi)\le (H, \varphi)\, \, \Longleftrightarrow \, \, K\le H \qaq [\varphi_K, \psi]\ne 0. 
$$ 
In addition,  two pairs $(K, \psi)$ and $(H, \varphi)$ in $\Gamma_{p,e}(G)$ are connected or lie in the same connected component if they can be joined by a finite sequence in which every two consecutive pair is compatible. The set of connected components of $\Gamma_{p, e}(G)$ is denoted by $\pi_0(\Gamma_{p,e}(G))$.  

\medskip
For a finite $p$-group $G$ with $p^{e+1}\mid |G|$, the intersection of all subgroups of order $p^{e+1}$ of $G$ is denoted by $I$. 

\medskip 

If $p^2\mid |G|$, it is proved in Theorem C of \cite{MY} that $\Gamma_{p,1}$ is disconnected iff $I\ne 1$. In Question 2 of \cite{MY}, Meng and Yang raise the following question: 

\medskip

{\bf Question.}  Let $G$ be a finite $p$-group, where $|G|$ is divisible by $p^{e+1}$. Assume that ~$e\ge 2$. Is it true that $\Gamma_{p,e}(G)$ is disconnected iff $I\ne 1$?

\medskip 

One implication has been prove at the end of \cite{MY}. In this short note, an affirmative answer to the previous question is provided; see Theorem \ref{2113c} below.

\section{Proofs}

The following lemma is very useful. 

\begin{lemma}\label{2125u}(\cite[Lemma 4]{MY}) Let $G$ be a finite $p$-group and $H$ a subgroup of $G$ with $|H|\ge p^{e+1}$. Then
	$$
	\pi_0(\Gamma_{p,e}(G))=\{[(H, \varphi)]|\varphi\in \irr(H)\}, 
	$$
	where $[(H, \varphi)]$ denotes the connected component containing $(H, \varphi)$ in $\Gamma_{p,e}(G)$. 
\end{lemma}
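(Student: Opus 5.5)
The claim is that every component of $\Gamma_{p,e}(G)$ meets the fibre over the fixed subgroup $H$. I will show that each pair $(K,\psi)\in\Gamma_{p,e}(G)$ is connected to some pair $(H,\varphi)$ with $\varphi\in\irr(H)$. Since every element of $\Gamma_{p,e}(G)$ lies in exactly one component, this gives the inclusion of $\pi_0(\Gamma_{p,e}(G))$ into $\{[(H,\varphi)]\mid \varphi\in\irr(H)\}$. The reverse inclusion is trivial, because $|H|\ge p^{e+1}$ and $G$ is a $p$-group, so $H\in\cS_{p,e}(G)$.

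\textbf{Step 1: moving up.} For any $(K,\psi)$ with $K\le L$ and $L$ a subgroup of $G$, Frobenius reciprocity gives $[\chi_K,\psi]=[\chi,\psi^L]\ne 0$ for any irreducible constituent $\chi$ of $\psi^L$. Hence $(K,\psi)\le(L,\chi)$. In particular every pair is connected to some pair $(G,\chi)$. So it suffices to show that every $(G,\chi)$, $\chi\in\irr(G)$, is connected to a pair over $H$.

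\textbf{Step 2: moving down.} This is the converse direction. For $\chi\in\irr(G)$, restrict to $H$ and take any irreducible constituent $\varphi$ of $\chi_H$, which exists since $\chi_H\ne 0$. Then $[\chi_H,\varphi]\ne0$ and $H\le G$, so $(H,\varphi)\le(G,\chi)$. Both pairs lie in $\Gamma_{p,e}(G)$, so they are compatible and lie in the same component.

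Combining the two steps, $(K,\psi)$ is connected to $(G,\chi)$, which is connected to $(H,\varphi)$, so $[(K,\psi)]=[(H,\varphi)]$. The only point needing care is checking that the intermediate groups are in $\cS_{p,e}(G)$: $G$ qualifies because $|G|\ge|H|\ge p^{e+1}$, and $H$ qualifies by hypothesis. No step is a genuine obstacle. The argument is just Frobenius reciprocity together with the observation that $G$ is a common upper bound for all subgroups.
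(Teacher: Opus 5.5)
Your argument is correct and is the natural one. Every $(K,\psi)$ lies below some $(G,\chi)$ with $\chi$ a constituent of $\psi^G$ (by Frobenius reciprocity), and every $(G,\chi)$ lies above some $(H,\varphi)$ with $\varphi$ a constituent of $\chi_H$; this works because $G$ itself belongs to $\cS_{p,e}(G)$ when $G$ is a $p$-group. The paper does not reprove this lemma (it quotes Lemma 4 of Meng--Yang), but it uses the same device of taking $(G,\omega)$ as a common upper bound in the proof of Theorem~\ref{3}.
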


\medskip

\begin{theorem}\label{3}
Let $H,K\in \cS_{p,e}(G)$,  $\alpha\in \irr(H)$, and $\beta\in \irr(K)$ such that  $[\alpha_{H\cap K},\beta_{H\cap K}]\neq 0$. Then $(H,\alpha)$ is connected with $(K,\beta)$ in $\Gamma_{p,e}(G)$.
\end{theorem}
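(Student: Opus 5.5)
The idea is to pass through a single large group containing both $H$ and $K$, so that we never need $H\cap K$ itself to lie in $\cS_{p,e}(G)$. As in the rest of the paper (compare Lemma \ref{2125u}), I take $G$ to be a finite $p$-group. Then $G\in\cS_{p,e}(G)$, because $|G|\ge|H|\ge p^{e+1}$. So for every $\theta\in\irr(G)$ the pair $(G,\theta)$ lies in $\Gamma_{p,e}(G)$.

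\textbf{Reduction.} It suffices to find $\theta\in\irr(G)$ such that $\alpha$ is a constituent of $\theta_H$ and $\beta$ is a constituent of $\theta_K$. Indeed, we would then have $(H,\alpha)\le(G,\theta)\ge(K,\beta)$, and hence a path of length two. By Frobenius reciprocity, $[\theta_H,\alpha]=[\theta,\alpha^G]$ and $[\theta_K,\beta]=[\theta,\beta^G]$. So the condition is equivalent to $[\alpha^G,\beta^G]\ne0$.

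\textbf{Key computation.} To show $[\alpha^G,\beta^G]\ne0$, I apply Frobenius reciprocity and then Mackey's formula:
$$
[\alpha^G,\beta^G]=[(\alpha^G)_K,\beta]=\sum_{x\in K\backslash G/H}\bigl[((\alpha^x)_{H^x\cap K})^K,\beta\bigr]
=\sum_{x\in K\backslash G/H}\bigl[(\alpha^x)_{H^x\cap K},\beta_{H^x\cap K}\bigr].
$$
Every summand is an inner product of two characters, hence a nonnegative integer. The summand for the trivial double coset $x=1$ is $[\alpha_{H\cap K},\beta_{H\cap K}]$, which is nonzero by hypothesis. Therefore the sum is positive, and the required $\theta$ exists.

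\textbf{Main obstacle.} The only delicate point is that $H\cap K$ may have order at most $p^e$, so that the naive path through $(H\cap K,\delta)$, where $\delta$ is a common constituent of $\alpha_{H\cap K}$ and $\beta_{H\cap K}$, is unavailable in $\Gamma_{p,e}(G)$. The argument above avoids this by going up to $G$ instead of down to $H\cap K$. The positivity of the Mackey terms is exactly what guarantees that the one nonzero term is not cancelled.

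If one wanted the statement for an arbitrary finite group $G$, the same argument would work whenever $H$ and $K$ lie in a common $p$-subgroup $P$: replace $G$ by $P$, and note that $(P,\theta)\in\Gamma_{p,e}(G)$. In the $p$-group setting of the paper this is automatic.
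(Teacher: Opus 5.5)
Your proof is correct and is essentially the paper's argument: both go up to $(G,\theta)$ instead of down to $H\cap K$, use Mackey's formula to see that the trivial double coset term $[\alpha_{H\cap K},\beta_{H\cap K}]$ makes $[(\alpha^G)_K,\beta]$ nonzero, and then take a common irreducible constituent $\theta$ of $\alpha^G$ and $\beta^G$ (equivalently, a constituent of $\alpha^G$ whose restriction to $K$ contains $\beta$). Your explicit remark that $G$ must be a $p$-group for $(G,\theta)$ to lie in $\Gamma_{p,e}(G)$ spells out a hypothesis the paper leaves implicit, and there is one harmless slip: with $\alpha^x$ defined on $H^x=x^{-1}Hx$, the Mackey sum should run over $H\backslash G/K$ rather than $K\backslash G/H$, which does not matter since only the $x=1$ term is used.
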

\proof
By the assumption,  there exists $\gamma\in \irr(H\cap K)$ such that both $\alpha$ and~$\beta$ lie over ~$\gamma$. Thus, 
$$
  [(\alpha^G)_{K},\beta]=\sum\limits_{x\in[H\backslash G/K]}[(^x\alpha)_{^xH\cap K}^K,\beta]\geqslant [\alpha_{H\cap K}^K,\beta]=[\alpha_{H\cap K},\beta_{H\cap K}]\geqslant[\gamma,\gamma]\neq 0, 
$$
where the first equality holds by Mackey Formula (see \cite[Problem 5.6]{I1}) and the second equality holds by Frobenius Reciprocity (see \cite[Lemma 5.2]{I1}.

\medskip

Thus, we may choose an irreducible constituent $\omega$ of $\alpha^G$ such that $[\omega_K, \beta]\ne 0$.  Then one can see that 
$$
(H,\alpha)\leqslant (G,\omega)\geqslant (K,\beta), 
$$
 which proves the assertion. 
\eprf

\medskip

The following theorem consists of the key ingredient of the main results. 

\medskip

\begin{theorem}\label{4}
 Suppose that $L_0,L_1,L_2\ldots L_n, L_{n+1}\in \cS_{p,e}(G)$. For each $j$, set $K_j=\bigcap\limits_{i=0}^jL_i$. Let $\alpha_0\in \irr(L_0)$, $\alpha_{n+1}\in \irr(L_{n+1})$ satisfying $[(\alpha_0)_{K_{n+1}},(\alpha_{n+1})_{K_{n+1}}]\neq 0$.  Then $(L_0,\alpha_0)$ is connected with $(L_{n+1},\alpha_{n+1})$ in $\Gamma_{p,e}(G)$.
\end{theorem}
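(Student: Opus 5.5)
We argue by induction on $n$, with Theorem \ref{3} as the base case. The idea is to peel off the last subgroup $L_{n+1}$ by passing through a character of $G$ itself. This costs nothing, since $G\in \cS_{p,e}(G)$ and every $(H,\varphi)\in\Gamma_{p,e}(G)$ lies below some $(G,\omega)$.

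\emph{Base case.} If $n=0$, then $K_1=L_0\cap L_1$. The hypothesis is exactly $[(\alpha_0)_{L_0\cap L_1},(\alpha_1)_{L_0\cap L_1}]\ne 0$, so Theorem \ref{3} gives the claim. (The case with no intermediate subgroups is the same statement.)

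\emph{Inductive step, choosing $\theta$ and $\omega$.} Assume the result for sequences with $n-1$ intermediate subgroups. Since $[(\alpha_0)_{K_{n+1}},(\alpha_{n+1})_{K_{n+1}}]\ne0$, there is $\gamma\in\irr(K_{n+1})$ lying under both $\alpha_0$ and $\alpha_{n+1}$. Because $K_{n+1}\le K_n\le L_0$, we can choose an irreducible constituent $\theta$ of $(\alpha_0)_{K_n}$ that lies over $\gamma$. Now $K_n\cap L_{n+1}=K_{n+1}$, so the Mackey computation from the proof of Theorem \ref{3}, applied to $\theta$ in place of $\alpha$ (it only needs Mackey and Frobenius reciprocity, not $|K_n|\ge p^{e+1}$), gives
$$[(\theta^G)_{L_{n+1}},\alpha_{n+1}]\ \ge\ [\theta_{K_{n+1}},(\alpha_{n+1})_{K_{n+1}}]\ \ge\ [\gamma,\gamma]\ne0 .$$
Hence some $\omega\in\irr(G)$ is a constituent of $\theta^G$ with $[\omega_{L_{n+1}},\alpha_{n+1}]\ne0$. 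This gives $(L_{n+1},\alpha_{n+1})\le (G,\omega)$. By Frobenius reciprocity, $\theta$ is also a constituent of $\omega_{K_n}$.

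\emph{Inductive step, descending to $L_n$.} Since $K_n\le L_n$ and $\theta$ is a constituent of $\omega_{K_n}$, we can choose an irreducible constituent $\omega'$ of $\omega_{L_n}$ lying over $\theta$. Then $(L_n,\omega')\le (G,\omega)$. Moreover $\alpha_0$ and $\omega'$ both lie over $\theta$ on $K_n=\bigcap_{i=0}^{n}L_i$, so $[(\alpha_0)_{K_n},\omega'_{K_n}]\ne0$. Apply the induction hypothesis to the shorter sequence $L_0,L_1,\dots,L_{n-1},L_n$ with end characters $\alpha_0$ and $\omega'$. It shows that $(L_0,\alpha_0)$ is connected to $(L_n,\omega')$. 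Chaining $(L_0,\alpha_0)\sim(L_n,\omega')\le(G,\omega)\ge(L_{n+1},\alpha_{n+1})$ finishes the induction.

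The main obstacle is the choice of intermediate characters. A common constituent on the small group $K_{n+1}$ need not give one on $L_0\cap L_{n+1}$, so Theorem \ref{3} cannot be applied directly. The device that resolves this is to first lift $\gamma$ to $\theta$ on $K_n$ inside $\alpha_0$, and then induce $\theta$ to $G$. The resulting $\omega$ simultaneously sits above $\alpha_{n+1}$ and restricts to $L_n$ over $\theta$. That reduces the length of the chain by one.
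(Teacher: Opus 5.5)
Your proof is correct and follows the paper's scheme: induction on $n$, peeling off $L_{n+1}$ by producing a character on $L_n$ that shares a constituent with $\alpha_0$ on $K_n$ (via Mackey and Frobenius reciprocity) and is linked to $\alpha_{n+1}$, then applying the induction hypothesis. The difference is only which end you build from (you induce $\theta\in\irr(K_n)$, taken under $\alpha_0$, up to $G$ and restrict to $L_n$, while the paper induces $\eta\in\irr(L_n\cap L_{n+1})$, taken under $\alpha_{n+1}$, up to $L_n$ and closes with Theorem \ref{3}); your base case $n=0$ is also the right one, since the paper's stated base case $n=1$ does not follow directly from Theorem \ref{3}.
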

\proof  We proceed by induction on $n$. If  $n=1$, the assertion follows by Theorem~\ref{3}.   
  
  \medskip

  Now assume that $n>1$. By the assumption, there exists $\gamma\in \irr(K_{n+1})$ such that  both ~$\alpha_0$ and $\alpha_{n+1}$ lie over~$\gamma$.
  
  Since
  $$
  0\ne [(\alpha_{n+1})_{K_{n+1}}, \gamma]=[((\alpha_{n+1})_{L_n\cap L_{n+1}})_{K_{n+1}}, \gamma], 
  $$
  there exists $\eta\in \irr(L_n\cap L_{n+1})$ such that $\eta$ lies over $\gamma$ and lies under $\alpha_{n+1}$.
  
  \medskip
  
  Then we have
  
  \begin{displaymath}
  	\begin{split}
  		&[(\alpha_0)_{K_n},(\eta^{L_n})_{K_n}]=\sum\limits_{x}[(\alpha_0)_{K_n},(^x\eta)_{^x(L_n\cap L_{n+1})\cap K_n}^{K_n}]\ge [(\alpha_0)_{K_n},\eta_{K_{n+1}}^{K_n}]\\
  		=&[(\alpha_0)_{K_{n+1}},\eta_{K_{n+1}}]\ge[\gamma,\gamma]=1, 
  	\end{split}
  \end{displaymath}
  where $x$ runs over a set of $(L_n\cap L_{n+1})\backslash L_n/K_n$ double coset representatives. 
  
  \medskip
  
  Thus there exists $\alpha_n\in \irr(\eta^{L_n})$ such that $[(\alpha_0)_{K_n},(\alpha_n)_{K_n}]\neq 0$. By the inductive hypothesis, we have $(L_0,\alpha_0)$ is connected with $(L_n,\alpha_n)$ in $\Gamma_{p,e}(G)$. 
  
  \medskip 
  
  Finally, note that
  $$
  [(\alpha_n)_{L_{n}\cap L_{n+1}},(\alpha_{n+1})_{L_n\cap L_{n+1}})]\geqslant [\eta,\eta]=1, 
  $$
which yields that  $(L_n,\alpha_{n})$ is connected with $(L_{n+1},\alpha_{n+1})$ by Theorem \ref{3}. 

\medskip 

We conclude that $(L_0, \alpha_0)$ is connected with $(L_{n+1}, \alpha_{n+1})$. 

\eprf

\medskip

The following result will provide an affirmative answer to Question 2 in \cite{MY}. 

\medskip

\begin{theorem}\label{2113c}
  Let  $G$ be a finite ~$p$-group with $p^{e+1}\mid |G|$, and let $I$ be the intersection of all subgroups of order $p^{e+1}$ in $G$. 
  Then
  $$
  |I\cap Z(G)|\leqslant |\pi_0(\Gamma_{p,e}(G))|\leqslant |\irr(I)|.
  $$
  In particular, we have $\Gamma_{p,e}(G)$ is connected iff $I=1$.
\end{theorem}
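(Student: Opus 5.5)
Both inequalities will be proved by comparing connected components with characters of a single subgroup. The upper bound will come from Theorem~\ref{4} combined with Lemma~\ref{2125u}. The lower bound will come from an invariant of components built from central characters.

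\emph{Upper bound.} Enumerate the subgroups of order $p^{e+1}$ of $G$ as $L_0,L_1,\dots,L_{n+1}$; all of them lie in $\cS_{p,e}(G)$. If there is only one such subgroup, simply repeat it in the list. Then $K_{n+1}=\bigcap_i L_i=I$. By Lemma~\ref{2125u} applied with $H=L_0$, every component has the form $[(L_0,\alpha)]$ with $\alpha\in\irr(L_0)$. To each such $\alpha$ attach some irreducible constituent $\gamma$ of $\alpha_I$. Suppose $\alpha,\alpha'\in\irr(L_0)$ lie over a common $\gamma\in\irr(I)$. Then $[\alpha_I,\alpha'_I]\ne0$. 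Theorem~\ref{4}, applied with $L_{n+1}$ replaced by $L_0$ (the intersection is still $I$) and with $\alpha_0=\alpha$, $\alpha_{n+1}=\alpha'$, shows that $(L_0,\alpha)$ and $(L_0,\alpha')$ are connected. Hence the map sending $\gamma\in\irr(I)$ to the component of any $(L_0,\alpha)$ with $\alpha$ over $\gamma$ is well defined. It is surjective, since every $\alpha$ lies over some $\gamma$. Therefore $|\pi_0|\le|\irr(I)|$. One should check that Theorem~\ref{4} allows repetitions among the $L_i$; its proof does not use distinctness.

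\emph{Lower bound.} Set $Z=I\cap Z(G)$; it is an abelian group. Every $H\in\cS_{p,e}(G)$ contains a subgroup of order $p^{e+1}$, so $H\supseteq I\supseteq Z$, and $Z\le Z(H)$. For $\varphi\in\irr(H)$, $\varphi_Z=\varphi(1)\lambda_\varphi$ for a unique linear character $\lambda_\varphi\in\irr(Z)$. If $(K,\psi)\le(H,\varphi)$, then $\psi$ is a constituent of $\varphi_K$. Restricting to $Z$ gives $\lambda_\psi=\lambda_\varphi$. So $(H,\varphi)\mapsto\lambda_\varphi$ is constant on components.

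This map is onto $\irr(Z)$. Given $\lambda\in\irr(Z)$, the induced character $\lambda^G$ has some irreducible constituent $\omega$. Since $Z$ is central, $\omega_Z$ is a multiple of $\lambda$, so $\lambda_\omega=\lambda$. Also $(G,\omega)\in\Gamma_{p,e}(G)$ because $p^{e+1}\mid|G|$. Hence $|\pi_0|\ge|\irr(Z)|=|Z|$.

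\emph{Consequence.} If $I=1$, the upper bound gives $|\pi_0|\le1$, so $\Gamma_{p,e}(G)$ is connected. If $I\ne1$, then $I$ is a nontrivial normal subgroup of the $p$-group $G$, being an intersection of a $G$-stable family of subgroups. Hence $I\cap Z(G)\ne1$, and the lower bound gives at least $p$ components.

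The main obstacle is making sure Theorem~\ref{4} applies in exactly the form needed: its intersection must be all of $I$ and its two endpoints must share the same subgroup $L_0$. The rest is routine central-character bookkeeping.
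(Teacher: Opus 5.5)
Your proposal is correct and follows the paper's proof. The upper bound is exactly Lemma~\ref{2125u} plus Theorem~\ref{4} with $L_{n+1}=L_0$, and your central-character invariant $\lambda_\varphi$ is the paper's poset map $(H,\alpha)\mapsto(I\cap Z(G),\beta)$, argued directly rather than via Lemma 5 of \cite{MY}. One small indexing slip: if you list all subgroups of order $p^{e+1}$ as $L_0,\dots,L_{n+1}$ and then \emph{replace} $L_{n+1}$ by $L_0$, you drop $L_{n+1}$ from the intersection, so you should instead list them as $L_0,\dots,L_n$ and \emph{append} $L_{n+1}:=L_0$, as the paper does.
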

\proof  Choose subgroups $L_0,  L_1,\ldots, L_n$  such that 
  $$
  \bigcap\limits_{i=0}^{n} L_i=I, 
  $$
  where each $L_i$ has order $p^{e+1}$, $0\le i \le n$. 
  
  \medskip
  
  For any $(L_0, \alpha_0), (L_0, \alpha_0')\in \Gamma_{p,e}(G)$, by taking $(L_{n+1}, \alpha_{n+1})=(L_0, \alpha_0')$  and noting that in the notation of Theorem \ref{4} $K_{n+1}=I$, one can see that, by Theorem ~\ref{4},  $(L_0, \alpha_0)$ is connected with $(L_0, \alpha_0')$ if $(\alpha_0)_I$ and 
  $(\alpha_0')_I$ have a common irreducible constituent. 
  
  As a consequence of Lemma \ref{2125u}, we have
  $$
  |\pi_0(\Gamma_{p,e}(G))|\leqslant |\irr(I)|, 
  $$
  which proves  the righhand side inequality in the theorem. 

  If $I\cap Z(G)=1$, the lefthand side inequality in the theorem holds trivially. So we may assume that  $|I\cap Z(G)|=p^{f+1}$ with $f$ a nonnegative integer. 
  
  \medskip Define  
  $$
  \begin{aligned}
    \varphi: \Gamma_{p,e}(G)&\rightarrow \Gamma_{p,f}(I\cap Z(G))\\
    (H,\alpha)&\mapsto (I\cap Z(G),\beta), 
  \end{aligned}
  $$
  where $\beta$ is the unique constituent of $\alpha_{I\cap Z(G)}$.  
  
  \medskip
  
   Note that if $(H_1,\alpha_1)\leqslant (H_2,\alpha_2)$, we have $\varphi((H_1,\alpha_1))=\varphi((H_2,\alpha_2))$, as $\alpha_1$ is an irreducible constituent of $\alpha_2$ and $(\alpha_2)_{I\cap Z(G)}$ has a unique irreducible constituent. Hence,  $\varphi$ is a poset map. 
  
  \medskip 
   
   It is easy to check $\varphi$ is a surjection. By Lemma 5 of \cite{MY},   we have
   $$
  |\pi_0(\Gamma_{p,f}(I\cap Z(G))|\leqslant |\pi_0(\Gamma_{p,e}(G))|.  
  $$
  
  \medskip
  
  Since $I\cap Z(G)$ is abelian and $|I\cap Z(G)|=p^{f+1}$, one can easily see that 
  $$
  |\pi_0(\Gamma_{p,f}(I\cap Z(G))|=|I\cap Z(G)|, 
  $$
  and hence the lefthand side inequality in the theorem is proved.  
  
  \medskip 
    
  In particular, if $\Gamma_{p,e}(G)$ is connected, then $|\pi_0(\Gamma_{p,e}(G))|=1$, which yields that  $I\cap Z(G)=1$ and therefore $I=1$ as  $I\vartriangleleft G$. Conversely, if $I=1$ then  $|\pi_0(\Gamma_{p,e}(G))|=1$, i.e., $\Gamma_{p,e}(G)$ is connected. 
\eprf

\section{Acknowledgment}

The work of the  authors is supported by Natural Science Foundation of China (No. 12371019).

\end{document}